\newtheorem{theorem}{Theorem}[section]
\newtheorem{proposition}[theorem]{Proposition}
\newtheorem{definition}[theorem]{Definition}
\newtheorem{example}[theorem]{Example}
\newtheorem{remark}[theorem]{Remark}
\def\cleardoublepage{\clearpage\if@twoside \ifodd\c@page\else%
    \hbox{}%
    \thispagestyle{empty}%
    \newpage%
    \if@twocolumn\hbox{}\newpage\fi\fi\fi}
\def\tablename{Table}
\renewcommand{\fnum@table}[1]{\tablename~\thetable.}
\begin{document}

\title{
{\begin{flushleft}
\vskip 0.45in
\end{flushleft}
\vskip 0.45in
\bfseries\scshape Optimal control and numerical software: an overview}}
\author{\bfseries Helena Sofia Rodrigues$^1$
\and \bfseries M. Teresa T. Monteiro$^2$
\and \bfseries Delfim F. M. Torres$^3$
\thanks{E-mail addresses: sofiarodrigues@esce.ipvc.pt, tm@dps.uminho.pt, delfim@ua.pt}\\
$^1$CIDMA and School of Business,\\Viana do Castelo Polytechnic Institute, Portugal\\
$^2$R\&D Centre Algoritmi, Department of Production and Systems,\\University of Minho, Portugal\\
$^3$CIDMA, Department of Mathematics,\\University of Aveiro, Portugal}

\date{}


\maketitle


\thispagestyle{empty}
\setcounter{page}{1}
\thispagestyle{fancy}
\fancyhead{}
\fancyhead[L]{This is a preprint of a paper whose final and definite form will appear in the book\\ 
'Systems Theory: Perspectives, Applications and Developments',\\
Nova Science Publishers, Editor: Francisco Miranda.\\
Submitted 21/Sept/2013; Revised 01/Dec/2013; Accepted 22/Jan/2014.}
\fancyhead[R]{}
\fancyfoot{}
\renewcommand{\headrulewidth}{0pt}


\begin{abstract}
Optimal Control (OC) is the process of determining control
and state trajectories for a dynamic system, over a period
of time, in order to optimize a given performance index.
With the increasing of variables and complexity, OC problems can no longer be solved
analytically and, consequently, numerical methods are required. For this purpose,
direct and indirect methods are used. Direct methods consist in the discretization of the OC problem,
reducing it to a nonlinear constrained optimization problem. Indirect methods are based
on the Pontryagin Maximum Principle, which in turn reduces to a boundary value problem.
In order to have a more reliable solution, one can solve the same problem through
different approaches. Here, as an illustrative example, an epidemiological application
related to the rubella disease is solved using several software packages,
such as the routine ode45 of Matlab, OC-ODE, DOTcvp toolbox, IPOPT and Snopt,
showing the state of the art of numerical software for OC.
\end{abstract}


\noindent \textbf{Key Words}: optimal control, numerical software, direct methods, indirect methods, rubella.
\vspace{.08in} \noindent \\
{\textbf AMS Subject Classification:} 49K15, 90C30, 93C15, 93C95.


\section{Introduction}

Historically, optimal control (OC) is an extension of the calculus of
variations. The first formal results of the calculus of variations
can be found in the seventeenth century.
Johann Bernoulli challenged other famous contemporary mathematicians---such
as Newton, Leibniz, Jacob Bernoulli, L'H\^{o}pital and von Tschirnhaus---with the
\emph{brachistochrone} problem: ``if a particle moves, under the
influence of gravity, which path between two fixed points enables
the trip of shortest time?'' (see, e.g., \cite{Bryson1996}).
This and other specific problems were solved, and a general mathematical
theory was developed by Euler and Lagrange. The most fruitful
applications of the calculus of variations have been to
theoretical physics, particularly in connection with Hamilton's
principle or the Principle of Least Action. Early applications to
economics appeared in the late 1920s and early 1930s by Ross,
Evans, Hottelling and Ramsey, with further applications published
occasionally thereafter \cite{Sussmann1997}.

\pagestyle{fancy}
\fancyhead{}
\fancyhead[EC]{H. S. Rodrigues, M. T. T. Monteiro, D. F. M. Torres}
\fancyhead[EL,OR]{\thepage}
\fancyhead[OC]{Optimal control and numerical software: an overview}
\fancyfoot{}
\renewcommand\headrulewidth{0.5pt}

The generalization of the calculus of variations to optimal
control theory was strongly motivated by military applications and has
developed rapidly since 1950. The decisive breakthrough was achieved
by the Russian mathematician Lev S. Pontryagin (1908-1988) and his
co-workers (V. G. Boltyanskii, R. V. Gamkrelidz and E. F.
Misshchenko) with the formulation and proof of the Pontryagin Maximum
Principle \cite{Pontryagin1962}. This principle has provided research with
suitable conditions for optimization problems with differential
equations as constraints. The Russian team generalized variational problems by
separating control and state variables and admitting control
constraints. The two approaches use a different point of view
and the OC approach often affords insight into a
problem that might be less readily apparent through the calculus
of variations. OC is also applied to problems where
the calculus of variations is not convenient, such as those
involving constraints on the derivatives of functions \cite{Leitmann1997}.

The theory of OC brought new approaches to Mathematics with Dynamic Programming.
Introduced by R. E. Bellman, Dynamic Programming makes use of the principle
of optimality and it is suitable for solving discrete problems, allowing for a significant
reduction in the computation of the optimal controls (see \cite{Kirk1998}). It is also possible
to obtain a continuous approach to the principle of optimality that leads to the solution
of a partial differential equation called the Hamilton-Jacobi-Bellman equation.
This result allowed to bring new connections between the OC problem and the Lyapunov stability theory.

Before the computer age, only fairly simple OC problems could be solved.
The arrival of the computer enabled the application of OC theory and its methods
to many complex problems. Selected examples are as follows:
\begin{itemize}
\item Physical systems, such as stable performance of motors and
machinery, robotics, and optimal guidance of rockets \cite{Goh2008,Molavi2008};

\item Aerospace, including driven problems, orbits transfers, development of
satellite launchers and recoverable problems of atmospheric
reentry \cite{Bonnard2008,Hermant2010};

\item Economics and management, such as optimal exploitation of natural
resources, energy policies, optimal investment of production
strategies \cite{Munteanu2008,Sun2008};

\item Biology and medicine, as regulation of physiological functions, plants growth,
infectious diseases, oncology, radiotherapy \cite{Joshi2002,Joshi2006,Lenhart2007,Nanda2007}.
\end{itemize}

Nowadays, OC is an extensive theory with several approaches.
One can adjust controls in a system
to achieve a certain goal, where the underlying system can include:
ordinary differential equations, partial differential equations,
discrete equations, stochastic differential equations,
integro-difference equations, combination of discrete and
continuous systems. In this work we restrict ourselves to deterministic
OC theory of ordinary differential equations in a fixed time interval.


\section{Optimal control problems}
\label{sec:1:1}

A typical OC problem requires a performance index or cost functional,
$J[x(\cdot),u(\cdot)]$; a set of state variables, $x(\cdot) \in X$;
and a set of control variables, $u(\cdot) \in U$.
The main goal consists in finding a piecewise continuous control $u(t)$,
$t_0 \leq t \leq t_f$, and the associated state variable $x(t)$,
to maximize the given objective functional.

\begin{definition}[Basic OC Problem in Lagrange form]
\label{OC_Lagrange}
\index{Lagrange form}
\noindent An OC problem is:
\begin{equation}
\label{OC_Lagrange_eq}
\begin{gathered}
\underset{u(\cdot)}{\max} \  J[x(\cdot),u(\cdot)]
=\int_{t_0}^{t_f}f(t,x(t),u(t))dt,\\
\text{s.t. }\  \dot{x}(t)=g(t,x(t),u(t)),\\
x(t_0)=x_0.
\end{gathered}
\end{equation}
\end{definition}

\begin{remark}
\label{rem:1}
The value of $x(t_f)$ in \eqref{OC_Lagrange_eq} is free,
which means that the value of $x(t_f)$ is unrestricted.
Sometimes one also considers problems with $x(t_f)$ fixed,
\emph{i.e}, $x(t_f)=x_f$ for a certain given $x_f$.
\end{remark}

For our purposes, $f$ and $g$ will always be continuously differentiable
functions in all three arguments.  The controls will always
be piecewise continuous, and the associated states will always
be piecewise differentiable. Note that we can switch back and forth
between maximization and minimization,
by simply negating the cost functional:
$$
\min\{J\}=-\max\{-J\}.
$$

An OC problem can be presented in many different, but equivalent ways,
depending on the purpose or the software to be used.


\subsection{Lagrange, Mayer and Bolza formulations}
\label{sec:1:2}

There are three well known equivalent formulations to describe an
OC problem, which are the Lagrange (Definition~\ref{OC_Lagrange}),
Mayer and Bolza forms \cite{Chachuat2007,Zabczyk2008}.

\begin{definition}[Basic OC Problem in Bolza form]
\label{OC_Bolza}\index{Bolza form}
Bolza's formulation of the OC problem is:
\begin{equation}
\label{OC_Bolza_eq}
\begin{gathered}
\underset{u(\cdot)}{\max} \ J[x(\cdot),u(\cdot)]
=\phi\left(t_{0},x(t_{0}),t_f,x(t_f)\right)+\int_{t_0}^{t_f}f(t,x(t),u(t))dt,\\
\text{s.t. } \ \dot{x}(t)=g(t,x(t),u(t)),\\
x(t_0)=x_0,
\end{gathered}
\end{equation}
where $\phi$ is a continuously differentiable function.
\end{definition}

\begin{definition}[Basic OC Problem in Mayer form]
\label{OC_Mayer}\index{Mayer form}
Mayer's formulation of the OC problem is:
\begin{equation}
\label{OC_Mayer_eq}
\begin{gathered}
\underset{u(\cdot)}{\max} \ J[x(\cdot),u(\cdot)]
=\phi\left(t_{0},x(t_{0}),t_f,x(t_f)\right),\\
\text{s.t. } \ \dot{x}(t)=g(t,x(t),u(t)),\\
x(t_0)=x_0.
\end{gathered}
\end{equation}
\end{definition}

\begin{theorem}
\label{thm:1}
The three formulations, Lagrange (Definition~\ref{OC_Lagrange})\index{Lagrange form},
Bolza (Definition~\ref{OC_Bolza})\index{Bolza form} and Mayer
(Definition~\ref{OC_Mayer}\index{Mayer form}), are equivalent.
\end{theorem}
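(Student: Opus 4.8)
The plan is to prove equivalence by exhibiting explicit, reversible transformations between each pair of formulations, showing that any instance of one can be rewritten as an instance of another with the same maximizer $u(\cdot)$ and the same optimal value (up to the trivial sign conventions already noted in the text). Since Lagrange, Bolza and Mayer are all defined over the same dynamics $\dot{x}(t)=g(t,x(t),u(t))$ with $x(t_0)=x_0$, the entire content of the proof concerns the objective functional; the constraints carry over untouched except for the augmentation of the state vector described below.

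First I would show Lagrange and Mayer are each a special case of Bolza, which is immediate: setting $\phi\equiv 0$ in \eqref{OC_Bolza_eq} recovers \eqref{OC_Lagrange_eq}, and taking $f\equiv 0$ in \eqref{OC_Bolza_eq} recovers \eqref{OC_Mayer_eq}. This establishes that Bolza is at least as general as the other two. The substance of the argument is therefore the two reductions that go the other way, converting a general Bolza problem into a pure Lagrange problem and into a pure Mayer problem.

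For the Bolza-to-Mayer reduction I would introduce an auxiliary state variable $z(\cdot)$ governed by $\dot{z}(t)=f(t,x(t),u(t))$ with initial condition $z(t_0)=0$. By the fundamental theorem of calculus, $z(t_f)=\int_{t_0}^{t_f}f(t,x(t),u(t))\,dt$, so the integral term in \eqref{OC_Bolza_eq} becomes the terminal value $z(t_f)$. Augmenting the state to $(x,z)$ and defining a new terminal payoff $\tilde{\phi}\left(t_0,x(t_0),t_f,x(t_f)\right)+z(t_f)$ casts the problem in pure Mayer form \eqref{OC_Mayer_eq}; the hypothesis that $f$ is continuously differentiable guarantees the new dynamics are admissible and $\tilde\phi$ remains continuously differentiable. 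Conversely, for the Bolza-to-Lagrange reduction I would convert the terminal term $\phi$ into an integral by writing $\phi\left(t_0,x(t_0),t_f,x(t_f)\right)=\phi\left(t_0,x(t_0),t_0,x(t_0)\right)+\int_{t_0}^{t_f}\frac{d}{dt}\phi\left(t_0,x(t_0),t,x(t)\right)dt$ and absorbing the constant first term (which does not depend on the control) and the integrand $\frac{\partial\phi}{\partial t}+\nabla_x\phi\cdot g$ into a redefined running cost $\tilde f$; chain-rule differentiability of $\phi$ and $g$ makes $\tilde f$ continuous in its arguments.

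The main obstacle I anticipate is bookkeeping rather than conceptual difficulty: one must verify that in each reduction the set of admissible control-state pairs is genuinely preserved, so that the supremum is attained by the same $u(\cdot)$ and the two optimal values agree. In particular, in the Bolza-to-Lagrange step the additive constant $\phi\left(t_0,x(t_0),t_0,x(t_0)\right)$ must be handled carefully, since $x(t_0)=x_0$ is fixed and hence this term is a control-independent constant that shifts the value of $J$ but not its argmax; and in the Bolza-to-Mayer step one must confirm that the augmented state $z$ is free at the terminal time (consistent with Remark~\ref{rem:1}) so that no spurious constraint is introduced. Chaining the three reductions closes the cycle Lagrange $\to$ Bolza $\to$ Mayer $\to$ Bolza $\to$ Lagrange, establishing the claimed pairwise equivalence.
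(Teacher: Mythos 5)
Your proposal is correct and takes essentially the approach the paper relies on: the paper's own ``proof'' is merely a citation to \cite{Chachuat2007,Zabczyk2008}, but the construction it then displays in \eqref{cap1_lagrange-mayer} is exactly your auxiliary-state augmentation $\dot{z}=f$, $z(t_0)=0$, specialized to the Lagrange-to-Mayer step, and your remaining reductions (Lagrange/Mayer as special cases of Bolza, and Bolza-to-Lagrange via $\frac{d}{dt}\phi$) are the standard ones from those references. One small caveat worth recording: in the Bolza-to-Lagrange step the redefined running cost $\tilde{f}=f+\frac{\partial\phi}{\partial t}+\nabla_x\phi\cdot g$ is, as you note, only guaranteed continuous, so to stay strictly within the paper's class of continuously differentiable data one should additionally assume $\phi\in C^2$.
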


\begin{proof}
See, e.g., \cite{Chachuat2007,Zabczyk2008}.
\end{proof}

The proof of Theorem~\ref{thm:1} gives a method to rewrite an optimal control problem
in any of the three forms to any other of the three forms.
Note that, from a computational perspective,
some of the OC problems, often presented in the Lagrange form,
should be converted into the equivalent Mayer form. Hence, using a standard procedure,
one rewrites the cost functional, augmenting the state vector with an extra component
(\textrm{cf.}, e.g., \cite{Lewis1995}). More precisely,
the Lagrange formulation \eqref{OC_Lagrange_eq} is rewritten as
\begin{equation}
\label{cap1_lagrange-mayer}
\begin{gathered}
\underset{u(\cdot)}{\max} \ x_c(t_f),\\
\text{s.t. } \ \dot{x}(t)=g(t,x(t),u(t)),\\
\dot{x}_c(t)=f(t,x(t),u(t)),\\
x(t_0)=x_0,\\
x_c(t_0)=0.
\end{gathered}
\end{equation}


\subsection{Pontryagin's Maximum Principle}
\label{sec:1:3}
\index{Pontryagin's Maximum Principle}

Necessary first order optimality conditions were developed by
Pontryagin and his co-workers. The result is considered as one of the most important
results of Mathematics in the 20th century.
Pontryagin introduced the idea of adjoint functions to append the
differential equation to the objective functional. Adjoint
functions have a similar purpose as Lagrange multipliers in
multivariate calculus, which append constraints to the function of
several variables to be maximized or minimized.

\begin{definition}[Hamiltonian]
\label{Hamiltonian}\index{Hamiltonian}
Consider the OC problem \eqref{OC_Lagrange_eq}. The function
\begin{equation}
\label{eq:H}
H(t, x, u,\lambda)=f(t,x,u)+\lambda \, g(t,x,u)
\end{equation}
is called the Hamiltonian (function), and $\lambda$
is the adjoint variable.
\end{definition}

We are ready to formulate the Pontryagin Maximum Principle (PMP)
for problem \eqref{OC_Lagrange_eq}.

\begin{theorem}[Pontryagin's Maximum Principle for \eqref{OC_Lagrange_eq}]
\label{PMP}\index{Pontryagin's Maximum Principle}
If $u^{*}(\cdot)$ and $x^{*}(\cdot)$ are optimal for problem \eqref{OC_Lagrange_eq},
then there exists a piecewise differentiable adjoint variable
$\lambda(\cdot)$ such that
$$
H(t,x^{*}(t),u(t),\lambda(t))
\leq H(t,x^{*}(t),u^{*}(t),\lambda(t))
$$
for all controls $u(t)$ at each time $t$, where $H$ is the
Hamiltonian \eqref{eq:H}, and
\begin{equation*}
\begin{split}
\lambda'(t) &=-\frac{\partial H(t,x^{*}(t),u^{*}(t),\lambda(t))}{\partial x},\\
\lambda(t_f)&=0.
\end{split}
\end{equation*}
\end{theorem}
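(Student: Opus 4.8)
The plan is to establish the maximum condition through the classical device of needle (spike) variations. Ordinary weak variations of the control would only yield the interior stationarity condition $\partial H/\partial u=0$ and thus cannot produce the genuine one-sided inequality when the admissible controls are constrained; a spike perturbation, by contrast, compares $u^{*}$ against an arbitrary admissible value on a vanishingly short interval and forces the correct conclusion. Concretely, I would fix a time $\tau\in(t_0,t_f)$ at which $u^{*}$ is continuous and an arbitrary admissible control value $v$. For small $\varepsilon>0$ set $u_\varepsilon$ equal to $v$ on $[\tau,\tau+\varepsilon]$ and equal to $u^{*}$ elsewhere, and let $x_\varepsilon$ be the trajectory it generates from $x(t_0)=x_0$. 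Because $u_\varepsilon=u^{*}$ on $[t_0,\tau]$, the two states agree there; a first-order expansion of $g$ across the needle then produces the jump
\[
x_\varepsilon(\tau+\varepsilon)-x^{*}(\tau+\varepsilon)=\varepsilon\bigl[g(\tau,x^{*}(\tau),v)-g(\tau,x^{*}(\tau),u^{*}(\tau))\bigr]+o(\varepsilon).
\]

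Next I would propagate this perturbation forward. Writing $y(t)=\lim_{\varepsilon\to0^{+}}\varepsilon^{-1}\bigl(x_\varepsilon(t)-x^{*}(t)\bigr)$ for the normalized variation, the claim is that on $[\tau,t_f]$ it solves the linearized equation
\[
\dot y(t)=\frac{\partial g}{\partial x}(t,x^{*}(t),u^{*}(t))\,y(t),\qquad y(\tau)=g(\tau,x^{*}(\tau),v)-g(\tau,x^{*}(\tau),u^{*}(\tau)).
\]
I then introduce the adjoint $\lambda$ precisely as the solution of $\lambda'=-\partial H/\partial x$ with terminal value $\lambda(t_f)=0$; its role is to let me evaluate the cost change without ever solving the variational equation. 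Differentiating $\lambda(t)y(t)$ and substituting both equations, the terms containing $\partial g/\partial x$ cancel, leaving $\frac{d}{dt}\bigl[\lambda y\bigr]=-\frac{\partial f}{\partial x}(t,x^{*},u^{*})\,y$. Integrating from $\tau$ to $t_f$ and using $\lambda(t_f)=0$ then gives the identity $\int_\tau^{t_f}\frac{\partial f}{\partial x}(t,x^{*},u^{*})\,y\,dt=\lambda(\tau)\,y(\tau)$.

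With this in hand I would compute the directional derivative of the cost, $\delta J=\lim_{\varepsilon\to0^{+}}\varepsilon^{-1}\bigl(J[x_\varepsilon,u_\varepsilon]-J[x^{*},u^{*}]\bigr)$. The needle interval contributes $f(\tau,x^{*},v)-f(\tau,x^{*},u^{*})$, while the tail $[\tau+\varepsilon,t_f]$ contributes $\int_\tau^{t_f}\frac{\partial f}{\partial x}(t,x^{*},u^{*})\,y\,dt$; substituting the identity above together with the value of $y(\tau)$ collapses the whole expression to
\[
\delta J=H(\tau,x^{*}(\tau),v,\lambda(\tau))-H(\tau,x^{*}(\tau),u^{*}(\tau),\lambda(\tau)).
\]
Since $u_\varepsilon$ is admissible and $(x^{*},u^{*})$ maximizes $J$, we have $J[x_\varepsilon,u_\varepsilon]\le J[x^{*},u^{*}]$ for every $\varepsilon>0$, hence $\delta J\le0$, which is exactly the asserted inequality at $\tau$ with control value $v$; letting $\tau$ range over the continuity points of $u^{*}$ and $v$ over all admissible values yields the maximum condition for all controls at each time. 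The adjoint equation and the transversality condition $\lambda(t_f)=0$ then hold by construction, the latter being forced precisely by the free terminal state of Remark~\ref{rem:1}.

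The step I expect to be the main obstacle is the rigorous propagation estimate of the second paragraph: showing that $x_\varepsilon-x^{*}=O(\varepsilon)$ uniformly on $[\tau,t_f]$ and that the normalized difference genuinely converges to the solution $y$ of the linear variational equation. This rests on the continuous differentiability of $f$ and $g$ assumed in the excerpt, on Gronwall-type inequalities to control the growth of the perturbation, and on the smooth dependence of solutions of ordinary differential equations on their data; the only additional care is that $u^{*}$ is merely piecewise continuous, which is exactly why the spike is placed at a continuity point of $u^{*}$. Everything following this estimate is the comparatively routine bookkeeping sketched above.
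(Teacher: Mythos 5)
Your proposal is mathematically sound, and the computations check out: the first-order jump produced by the needle, the variational equation for $y$, the cancellation in $\frac{d}{dt}\bigl[\lambda(t)y(t)\bigr]=-f_x\,y$ forced by the choice $\lambda'=-\partial H/\partial x$, and the collapse of $\delta J$ to $H(\tau,x^{*}(\tau),v,\lambda(\tau))-H(\tau,x^{*}(\tau),u^{*}(\tau),\lambda(\tau))\leq 0$ are all correct, as is the observation that $\lambda(t_f)=0$ is exactly what kills the boundary term when the terminal state is free (Remark~\ref{rem:1}). Be aware, however, that the paper does not prove Theorem~\ref{PMP} at all: its ``proof'' is a deferral to the literature, stating that the argument follows classical variational methods as in \cite{Lenhart2007}, with \cite{Pontryagin1962} and \cite{Clarke1990} cited for detailed and more general proofs. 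So the comparison is between your sketch and the cited sources. The derivation in \cite{Lenhart2007} that the paper points to proceeds by weak variations $u^{*}+\varepsilon h$, which yields the adjoint equation, the transversality condition, and the stationarity condition \eqref{Hu} --- adequate for problem \eqref{OC_Lagrange_eq}, where the control is unconstrained, but not the pointwise inequality as stated. Your needle-variation argument is instead the classical route of \cite{Pontryagin1962}: it costs you the Gronwall-based propagation estimate you candidly flag as the technical core (standard under the paper's standing $C^1$ hypotheses on $f$ and $g$), but it buys the genuine one-sided maximum condition and extends verbatim to the bounded-control setting of Theorem~\ref{thm:PMP:bc}, which weak variations cannot reach --- a distinction you correctly identify at the outset. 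Two minor points to tighten in a full write-up: the adjoint ODE is linear with piecewise continuous coefficients, so existence of a piecewise differentiable $\lambda$ on all of $[t_0,t_f]$ should be stated explicitly; and the passage from the inequality at continuity points of $u^{*}$ to ``each time $t$'' deserves the one-line continuity/one-sided-limit remark rather than being left implicit.
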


The proof of Theorem~\ref{PMP} follows classical variational methods
and can be found, e.g., in \cite{Lenhart2007}. The original
Pontryagin's book \cite{Pontryagin1962}
or Clarke's book \cite{Clarke1990} are good references
to find more general results and their detailed proofs.

\begin{remark}
The last condition of Theorem~\ref{PMP}, $\lambda(t_f)=0$, is
called the transversality condition\index{Transversality condition},
and is only used when the OC problem does not have the terminal value in the state variable,
\emph{i.e.}, $x(t_f)$ is free (cf. Remark~\ref{rem:1}).
\end{remark}

Theorem~\ref{PMP} converts the problem of finding a control which
maximizes the objective functional subject to the state ODE and
initial condition, into the problem of optimizing the Hamiltonian
pointwise. As a consequence, we have
\begin{equation}
\label{Hu}
\displaystyle{\frac{\partial H}{\partial u}}=0
\end{equation}
at $u^{*}$ for each $t$, that is, the Hamiltonian\index{Hamiltonian} has a
critical point at at $u^{*}$. Usually this condition is called the \emph{optimality
condition}\index{Optimality condition}.

\begin{remark}
If the Hamiltonian\index{Hamiltonian} is linear in the control variable $u$, it can
be difficult to calculate $u^{*}$ from the optimality equation,
since $\frac{\partial H}{\partial u}$ would not contain $u$.
Specific ways to solve such kind of problems can be found, for example,
in \cite{Lenhart2007}.
\end{remark}

Until here we have shown necessary conditions to solve basic optimal control problems.
Now, it is important to study some conditions that can guarantee the existence
of a finite objective functional value at the optimal control and state variables
\cite{Fleming1975,Kamien1991,Lenhart2007,Macki1982}.
The following is an example of a sufficient condition.

\begin{theorem}
\label{thm:suf}
Consider the following problem:
\begin{equation*}
\begin{gathered}
\underset{u(\cdot)}{\max}\  J\left[x(\cdot),u(\cdot)\right]
=\int_{t_0}^{t_f}f(t,x(t),u(t))dt,\\
\text{s.t. } \ \dot{x}(t)=g(t,x(t),u(t)),\\
x(t_0)=x_0.
\end{gathered}
\end{equation*}
Suppose that $f(t,x,u)$ and $g(t,x,u)$ are both continuously differentiable
functions in their three arguments and concave in $x$ and $u$. If $u^{*}(\cdot)$
is a control with associated state $x^{*}(\cdot)$ and $\lambda(\cdot)$
a piecewise differentiable function such that $u^{*}(\cdot)$, $x^{*}(\cdot)$
and $\lambda(\cdot)$ together satisfy
\begin{gather*}
f_u+\lambda g_u=0 \Leftrightarrow \frac{\partial H}{\partial u} = 0,\\
\lambda'=-(f_x+\lambda g_x) \Leftrightarrow \lambda'= - \frac{\partial H}{\partial x},\\
\lambda(t_f)=0,\\
\lambda(t)\geq 0
\end{gather*}
on $t_0\leq t \leq t_f$, then
$$
J[x^{*}(\cdot),u^{*}(\cdot)] \geq J[x(\cdot),u(\cdot)]
$$
for any admissible pair $\left(x(\cdot),u(\cdot)\right)$.
\end{theorem}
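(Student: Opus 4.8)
The plan is to estimate the difference $J[x^*(\cdot),u^*(\cdot)]-J[x(\cdot),u(\cdot)]$ directly and to show it is nonnegative for every admissible pair $(x(\cdot),u(\cdot))$. Writing this difference as the single integral $\int_{t_0}^{t_f}\left[f(t,x^*,u^*)-f(t,x,u)\right]dt$, the idea is to bound the integrand from below using the concavity hypotheses, substitute the assumed necessary conditions, and finally recognize the resulting expression as an exact derivative that integrates to a boundary term. Equivalently, one may note that $f$, $g$ concave together with $\lambda\ge 0$ make the Hamiltonian $H=f+\lambda g$ concave in $(x,u)$, but I will keep the two functions separate to match the notation of the statement.

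First I would exploit the concavity of $f$ in $(x,u)$: since the graph of a differentiable concave function lies below its tangent planes, $f(t,x,u)\le f(t,x^*,u^*)+f_x(x-x^*)+f_u(u-u^*)$, with the partials evaluated along $(t,x^*,u^*)$. Rearranging yields $f(t,x^*,u^*)-f(t,x,u)\ge f_x(x^*-x)+f_u(u^*-u)$. Next I would insert the conditions that $(x^*,u^*,\lambda)$ is assumed to satisfy, namely the optimality condition $f_u=-\lambda g_u$ and the adjoint equation $f_x=-\lambda'-\lambda g_x$, turning the lower bound into $-\lambda'(x^*-x)-\lambda\left[g_x(x^*-x)+g_u(u^*-u)\right]$.

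The second use of concavity is on $g$: from $g(t,x,u)\le g(t,x^*,u^*)+g_x(x-x^*)+g_u(u-u^*)$ I obtain $g_x(x^*-x)+g_u(u^*-u)\le g(t,x^*,u^*)-g(t,x,u)=\dot x^*-\dot x$, where the state equation $\dot x=g(t,x,u)$ is used for both trajectories. This is exactly where the sign hypothesis $\lambda(t)\ge 0$ becomes indispensable: weighting the last inequality by $-\lambda$ reverses it in the desired direction, giving $f(t,x^*,u^*)-f(t,x,u)\ge -\lambda'(x^*-x)-\lambda(\dot x^*-\dot x)$. The crucial observation is that the right-hand side is precisely $-\frac{d}{dt}\left[\lambda(t)\bigl(x^*(t)-x(t)\bigr)\right]$, the telescoping identity that makes the argument close.

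Finally I would integrate this inequality over $[t_0,t_f]$; since $\lambda$, $x^*$ and $x$ are continuous and piecewise differentiable, the fundamental theorem of calculus collapses the integral of the total derivative to the boundary values $\lambda(t_0)\bigl(x^*(t_0)-x(t_0)\bigr)-\lambda(t_f)\bigl(x^*(t_f)-x(t_f)\bigr)$. Both terms vanish: the first because every admissible state satisfies the common initial condition $x^*(t_0)=x(t_0)=x_0$, and the second by the transversality condition $\lambda(t_f)=0$. Hence $J[x^*,u^*]-J[x,u]\ge 0$, as claimed. I expect the main obstacle to be purely the bookkeeping of signs—specifically, seeing clearly that $\lambda\ge 0$ is needed precisely to preserve the inequality when the concavity estimate for $g$ is scaled by the adjoint variable, and recognizing the combined expression as an exact derivative so that the initial and transversality conditions can be brought to bear.
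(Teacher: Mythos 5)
Your proof is correct and is essentially the argument the paper defers to: the paper proves this theorem only by citation to Lenhart and Workman, and the proof there is exactly your Mangasarian-type computation — tangent-plane inequalities from concavity of $f$ and $g$, substitution of the optimality and adjoint conditions, recognition of $-\frac{d}{dt}\bigl[\lambda(x^{*}-x)\bigr]$, and elimination of the boundary terms via $x^{*}(t_0)=x(t_0)=x_0$ and $\lambda(t_f)=0$. You also correctly identified the precise role of $\lambda(t)\geq 0$ in preserving the inequality when the concavity estimate for $g$ is scaled by $-\lambda$, so nothing is missing.
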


\begin{proof}
The proof of this theorem can be found in \cite{Lenhart2007}.
\end{proof}

Theorem~\ref{thm:suf} is not strong enough to guarantee
that $J[x^{*}(\cdot),u^{*}(\cdot)]$ is finite.
Such results usually require some conditions on $f$ and/or $g$.
Next theorem is an example of an existence result from \cite{Fleming1975}
(cf. \cite[Theorem~2.2]{Lenhart2007}).

\begin{theorem}
\label{thm:ex}
Let the set of controls for problem \eqref{OC_Lagrange_eq}
be Lebesgue integrable functions on $t_0\leq t\leq t_f$ in $\mathbb{R}$.
Suppose that $f(t,x,u)$ is concave in $u$, and there exist constants
$C_1, C_2, C_3 >0$, $C_4$, and $\beta >1$ such that
\begin{gather*}
g(t,x,u)=\alpha(t,x)+\beta(t,x)u,\\
|g(t,x,u)|\leq C_1(1+|x|+|u|),\\
|g(t,x_1,u)-g(t,x,u)|\leq C_2 |x_1-x|(1+|u|),\\
f(t,x,u)\leq C_3|u|^{\beta}-C_4
\end{gather*}
for all $t$ with $t_0\leq t\leq t_1$, $x$, $x_1$, $u$ in $\mathbb{R}$.
Then there exists an optimal pair $\left(x^{*}(\cdot),u^{*}(\cdot)\right)$
maximizing $J$, with $J[x^{*}(\cdot),u^{*}(\cdot)]$ finite.
\end{theorem}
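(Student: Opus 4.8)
The plan is to prove this by the direct method of the calculus of variations, in the Filippov--Cesari spirit: produce a maximizing sequence of admissible pairs, extract convergent subsequences for the states and for the controls, and show that the limit is itself admissible and attains the supremum. First I would check that the admissible set is nonempty and that $J$ is bounded above, so that $M:=\sup J$ is finite. Here the superlinear growth hypothesis on $f$ in the control plays a double role: because it penalizes large controls at order $|u|^{\beta}$ with $\beta>1$, it both forces $\int_{t_0}^{t_f} f\,dt$ to be bounded above over all admissible pairs (hence $M<\infty$) and prevents any near-maximizing control from having large norm. Fixing a maximizing sequence $(x_n(\cdot),u_n(\cdot))$ with $J[x_n,u_n]\to M$, this same estimate yields a uniform a priori bound $\|u_n\|_{L^\beta}\le R$.

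Second, I would establish the compactness estimates. Although the admissible controls are only assumed $L^1$, the bound just obtained places the maximizing controls in a fixed ball of $L^\beta$, and since $\beta>1$ this space is reflexive, so $u_n\rightharpoonup u^*$ weakly along a subsequence. For the states, I would insert the sublinear bound $|g(t,x,u)|\le C_1(1+|x|+|u|)$ into the integral form of the state equation and apply Gronwall's inequality to get a uniform bound on $\|x_n\|_\infty$; combined with a H\"older estimate on $\int|u_n|$ (controlled by $R$), this shows the $x_n$ are uniformly bounded and uniformly equicontinuous (in fact H\"older of exponent $1-1/\beta$). By the Arzel\`a--Ascoli theorem a further subsequence satisfies $x_n\to x^*$ uniformly.

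Third comes the passage to the limit in the dynamics, and this is exactly where the affine structure $g=\alpha(t,x)+\beta(t,x)u$ is indispensable. Writing the state equation in integral form and splitting $g(t,x_n,u_n)=\alpha(t,x_n)+\beta(t,x_n)u_n$, the first term converges by continuity together with the uniform convergence of $x_n$. The delicate product $\beta(t,x_n)u_n$ is treated by pairing the \emph{strong} (uniform) convergence $\beta(t,x_n)\to\beta(t,x^*)$ with the \emph{weak} convergence $u_n\rightharpoonup u^*$, the Lipschitz hypothesis on $g$ in $x$ bounding the cross terms. One concludes that $x^*$ solves $\dot{x}^*=g(t,x^*,u^*)$ with $x^*(t_0)=x_0$, so the pair $(x^*,u^*)$ is admissible; note that linearity of $g$ in $u$ is what keeps the constraint closed under weak convergence of the control.

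Finally I would show $J[x^*,u^*]\ge M$, i.e. upper semicontinuity of the cost under weak convergence of the controls, and I expect this to be the main obstacle: weak convergence of $u_n$ does not by itself pass through the nonlinear integrand $f$. The standard remedy is a Mazur-type argument, replacing $u_n$ by finite convex combinations $v_n\to u^*$ converging \emph{strongly} in $L^\beta$, and then exploiting concavity of $f$ in $u$ together with the uniform convergence of the states to pass the $\limsup$ inside the integral in the upper-semicontinuous direction. Concavity of $f$ in $u$ is precisely the hypothesis that makes this inequality go the right way. This gives $J[x^*,u^*]\ge\limsup J[x_n,u_n]=M$, while admissibility of $(x^*,u^*)$ forces $J[x^*,u^*]\le M$; hence equality holds, $(x^*,u^*)$ is optimal, and $J[x^*,u^*]=M$ is finite.
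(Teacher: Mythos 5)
The paper offers no proof of Theorem~\ref{thm:ex} at all: its ``proof'' is a pointer to Fleming and Rishel \cite{Fleming1975} (see also Theorem~2.2 of \cite{Lenhart2007}). What you have written is, in outline, precisely the argument of that cited reference --- the classical direct method: a maximizing sequence, an a priori $L^{\beta}$ bound on the controls from the growth condition, Gronwall plus Arzel\`a--Ascoli for the states, passage to the limit in the dynamics via the affine structure $g=\alpha(t,x)+\beta(t,x)u$ (pairing the weak convergence $u_n\rightharpoonup u^{*}$ with the strong convergence of the bounded, Lipschitz factor $\beta(t,x_n)$, both of which are indeed extractable from the stated bounds on $g$), and upper semicontinuity of the cost via concavity of $f$ in $u$ through Mazur's lemma. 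So you have reconstructed the cited proof rather than found a different route; your one liberty --- using reflexivity of $L^{\beta}$ in place of Dunford--Pettis weak-$L^{1}$ compactness --- is harmless since $\beta>1$.

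Two points should be made explicit. First, you have silently corrected the statement. As printed, $f(t,x,u)\leq C_3|u|^{\beta}-C_4$ with $C_3>0$ bounds nothing: for instance $f(t,x,u)=u$, $g(t,x,u)=u$ satisfies every printed hypothesis (concave in $u$, affine $g$, and $u\leq u^{2}+\tfrac14$), yet $\sup J=+\infty$ and no maximizer exists. The coercive direction your first step actually uses, $f(t,x,u)\leq C_4-C_3|u|^{\beta}$, is the intended one (consistent with the paper's own remark that for minimization the inequality is ``reversed (coercivity)''), and it is what delivers both $M<\infty$ and $\|u_n\|_{L^{\beta}}\leq R$; verbatim, with the inequality as stated, your opening step fails. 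Second, in the Mazur step the concavity inequality $f\left(t,x,\sum_i\lambda_i u_i\right)\geq\sum_i\lambda_i f(t,x,u_i)$ requires a \emph{common} state argument, whereas your convex combinations mix controls $u_i$ attached to different states $x_i$. You therefore need first to trade $f(t,x_n,u_n)$ for $f(t,x^{*},u_n)$, and since the $u_n$ are unbounded, continuity of $f$ alone does not make this swap uniform: one must split off the set $\{|u_n|>K\}$ (small in measure by the $L^{\beta}$ bound, and contributing very negatively by coercivity) and use uniform continuity of $f$ on the compact set where $|u_n|\leq K$. You correctly identified this as the main obstacle and named the right remedy, but the sketch as written glosses over this decoupling; it is exactly the technical content that the reference's lower-closure argument supplies.
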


\begin{proof}
The proof is given in \cite{Fleming1975}.
\end{proof}

\begin{remark}
For a minimization problem, $f$ would have a convex property
and the inequality on $f$ would be reversed (coercivity).
\end{remark}

It is important to note that the necessary conditions developed to this point deal
with piecewise continuous optimal controls, while the existence
Theorem~\ref{thm:ex} guarantees an optimal control which is only Lebesgue integrable.
This gap can be overcome by studying regularity conditions
\cite{my:reg1,my:reg2}.


\subsection{Optimal control with bounded controls}
\label{sec:1:4}

Many problems, to be realistic, require bounds on the controls.

\begin{definition}[OC problem with bounded controls]
An OC problem with bounded control, in Lagrange form, is:
\begin{equation}
\label{OC_bounded_control}
\begin{gathered}
\underset{u(\cdot)}{\max} J[x(\cdot),u(\cdot)]=\int_{t_0}^{t_f} f(t,x(t),u(t)) dt,\\
\text{s.t. }\ \dot{x}(t)=g(t,x(t),u(t)),\\
x(t_0)=x_0,\\
a\leq u(t)\leq b,
\end{gathered}
\end{equation}
where $a$ and $b$ are fixed real constants with $a<b$.
\end{definition}

The Pontryagin Maximum Principle (Theorem~\ref{PMP})
remains valid for problems with bounds on the control,
except the maximization is over all admissible controls,
that is, $a\leq u(t)\leq b$ for all $t \in [t_0, t_f]$.

\begin{theorem}[Pontryagin's Maximum Principle for \eqref{OC_bounded_control}]
\label{thm:PMP:bc}
If $u^{*}(\cdot)$ and $x^{*}(\cdot)$ are optimal for problem \eqref{OC_bounded_control},
then there exists a piecewise differentiable adjoint variable $\lambda(\cdot)$ such that
$$
H(t,x^{*}(t),u(t),\lambda(t))\leq H(t,x^{*}(t),u^{*}(t),\lambda(t))
$$
for all admissible controls $u$ at each time $t$, where $H$ is the Hamiltonian \eqref{eq:H}, and
\begin{gather}
\lambda'(t)=-\frac{\partial H(t,x^{*}(t),u^{*}(t),\lambda(t))}{\partial x} \tag{adjoint condition},\\
\lambda(t_f)=0 \tag{transversality condition}\index{Transversality condition}.
\end{gather}
\end{theorem}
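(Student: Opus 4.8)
The plan is to follow the same variational argument that establishes Theorem~\ref{PMP}, modifying only the class of admissible perturbations so as to respect the control bounds $a \leq u(t) \leq b$ in \eqref{OC_bounded_control}. The first point to notice is that the adjoint equation and the transversality condition are insensitive to these bounds: both arise from appending the dynamics $\dot{x}=g$ to the cost through the multiplier $\lambda$ and from requiring that the terms multiplying the free state variation vanish. Since the constraint on $u$ leaves the endpoint $x(t_f)$ free exactly as in \eqref{OC_Lagrange_eq}, the derivation of $\lambda'(t)=-\partial H/\partial x$ and $\lambda(t_f)=0$ carries over verbatim from the proof of Theorem~\ref{PMP}. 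Thus the only genuinely new content is the maximality inequality.

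The bounds enter through the control variation. In the unconstrained setting one perturbs an interior optimal control by $u^{*}+\epsilon h$ with $h$ arbitrary and $\epsilon$ of either sign, which forces the stationarity condition $\partial H/\partial u=0$. Here $u^{*}(t)$ may lie on the boundary of $[a,b]$, so two-sided variations are no longer admissible, and I would instead use needle (spike) variations. Fix a time $\tau\in(t_0,t_f)$ and an arbitrary admissible value $v\in[a,b]$, and define $u^{\epsilon}$ to equal $v$ on the short interval $[\tau,\tau+\epsilon]$ and $u^{*}$ elsewhere. Each $u^{\epsilon}$ is admissible by construction, and optimality of $u^{*}$ for the maximization problem gives $J[x^{\epsilon},u^{\epsilon}]\leq J[x^{*},u^{*}]$, hence a nonpositive one-sided derivative at $\epsilon=0^{+}$.

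The heart of the argument is to evaluate this one-sided derivative. I would linearize the state equation about $(x^{*},u^{*})$ to obtain the variational equation governing the perturbation $x^{\epsilon}-x^{*}$ generated by the spike, and then use the adjoint variable $\lambda$---whose defining ODE is designed precisely for this---to convert the forward-propagated state perturbation into a pointwise quantity. Introducing $\lambda$ and integrating $\frac{d}{dt}(\lambda(x^{\epsilon}-x^{*}))$ against the variational equation, with $\lambda(t_f)=0$ killing the boundary term, collapses the derivative to a single Hamiltonian difference,
\[
\left.\frac{d}{d\epsilon}J[x^{\epsilon},u^{\epsilon}]\right|_{\epsilon=0^{+}}
= H(\tau,x^{*}(\tau),v,\lambda(\tau))-H(\tau,x^{*}(\tau),u^{*}(\tau),\lambda(\tau)),
\]
where $H$ is the Hamiltonian \eqref{eq:H}. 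Nonpositivity of the derivative then yields $H(\tau,x^{*}(\tau),v,\lambda(\tau))\leq H(\tau,x^{*}(\tau),u^{*}(\tau),\lambda(\tau))$, and since $\tau$ and $v\in[a,b]$ were arbitrary, this is exactly the asserted maximality over all admissible controls at each time $t$.

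I expect the main obstacle to be the needle-variation estimate itself: showing rigorously that the localized spike propagates through the nonlinear dynamics so that, to first order in $\epsilon$, the entire change in the cost is captured by the single Hamiltonian difference at $\tau$. This demands uniform control of the linearization error and leans on the continuous differentiability of $f$ and $g$; it is the one place where the argument exceeds elementary calculus. A less technical route, available because $[a,b]$ is convex, is the convex-combination variation $u^{\epsilon}=(1-\epsilon)u^{*}+\epsilon u$ for admissible $u$ and $\epsilon\in[0,1]$, which readily gives the first-order inequality $\int_{t_0}^{t_f}(\partial H/\partial u)(u-u^{*})\,dt\leq 0$; recovering the full pointwise maximality from this weaker statement, however, is cleanest under additional concavity of $H$ in $u$, so the needle construction is the more robust choice.
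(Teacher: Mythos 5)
The paper gives no proof of Theorem~\ref{thm:PMP:bc}: it only remarks that Theorem~\ref{PMP} ``remains valid'' with maximization restricted to admissible controls, deferring to the classical references \cite{Lenhart2007}, \cite{Pontryagin1962} and \cite{Kamien1991}, whose arguments are precisely the variational ones you sketch. Your proposal is correct in outline and takes essentially that same classical route --- the needle-variation computation reducing the one-sided derivative of $J$ to the Hamiltonian difference at $\tau$ (via $\frac{d}{dt}\bigl(\lambda\,(x^{\epsilon}-x^{*})\bigr)$ and $\lambda(t_f)=0$) is exactly the standard argument, and you rightly identify the uniform linearization estimate as the only genuinely technical step.
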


The following proposition is a direct consequence of Theorem~\ref{thm:PMP:bc}.
The proof can be found, e.g., in \cite{Kamien1991} or \cite{Lenhart2007}.

\begin{proposition}
The optimal control $u^{*}(\cdot)$ to problem \eqref{OC_bounded_control}
must satisfy the following optimality condition\index{Optimality condition}:
\begin{equation}
\label{eq:opt:cond}
u^{*}(t)=
\begin{cases}
a & \textrm{if } \frac{\partial H}{\partial u}<0\\
\tilde{u} & \textrm{if } \frac{\partial H}{\partial u}=0\\
b & \textrm{if } \frac{\partial H}{\partial u}>0,
\end{cases}
\end{equation}
where $a\leq  \tilde{u} \leq b$,
is obtained by the expression $\frac{\partial H}{\partial u}=0$.
In particular, the optimal control $u^{*}(\cdot)$ maximizes $H$ pointwise with respect to $a \leq u \leq b$.
\end{proposition}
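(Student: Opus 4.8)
The plan is to derive the characterization directly from the pointwise maximization asserted by Pontryagin's principle. By Theorem~\ref{thm:PMP:bc}, for the optimal pair $(x^*(\cdot),u^*(\cdot))$ of \eqref{OC_bounded_control} there exists an adjoint $\lambda(\cdot)$ such that, at each time $t$, $H(t,x^*(t),u,\lambda(t)) \leq H(t,x^*(t),u^*(t),\lambda(t))$ for every admissible $u \in [a,b]$. Freezing $t$, $x^*(t)$ and $\lambda(t)$, this says precisely that $u^*(t)$ is a global maximizer of the single-variable function $h(u) := H(t,x^*(t),u,\lambda(t))$ on the compact interval $[a,b]$. Thus the whole claim, including the final ``in particular'' sentence, reduces to a standard first-order analysis of maximizing a $C^1$ function over a closed interval.

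First I would record that $h$ is continuously differentiable in $u$: since $f$ and $g$ are $C^1$ in all their arguments and $H=f+\lambda g$ by \eqref{eq:H}, we have $h'(u)=\partial H/\partial u$, and compactness of $[a,b]$ guarantees that the maximum is attained. Then I would split according to the location of the maximizer. If $u^*(t)$ lies in the open interval $(a,b)$, Fermat's interior-extremum condition forces $\partial H/\partial u=0$ there, which is the middle case $u^*(t)=\tilde u$. If instead $u^*(t)=a$, then $h$ cannot increase as $u$ moves rightward, so $\partial H/\partial u \leq 0$ at the lower bound; the strict sign $\partial H/\partial u<0$ excludes an interior critical point and pins the maximizer to $a$. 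Symmetrically, $u^*(t)=b$ corresponds to $\partial H/\partial u \geq 0$, with strict positivity forcing the upper bound. Collecting the three cases yields exactly the stated formula.

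The one point requiring care is the passage from the sign of $\partial H/\partial u$ to a single unambiguous case, since a priori this derivative may change sign along $[a,b]$. The clean way to organize this is to define $\tilde u$ as the solution of the unconstrained optimality equation $\partial H/\partial u=0$ and to recognize that $u^*(t)$ is the projection of $\tilde u$ onto $[a,b]$: the maximizer equals $\tilde u$ when $\tilde u \in [a,b]$, and is the nearer endpoint otherwise. Under concavity of $H$ in $u$---the setting of the sufficiency result Theorem~\ref{thm:suf}---the derivative $\partial H/\partial u$ is nonincreasing in $u$, so the three sign conditions describe three mutually exclusive positions of $\tilde u$ relative to the interval, and the projection is the unique global maximizer. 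I would therefore either invoke such concavity or argue only locally at the optimal point, which is all the necessary condition actually demands. This is the main obstacle; everything else is elementary calculus on an interval, and the ``in particular'' statement is already contained in the reduction to pointwise maximization of $H$.
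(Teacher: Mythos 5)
Your proof is correct and follows exactly the route the paper intends: the paper itself gives no argument, stating only that the proposition is a direct consequence of Theorem~\ref{thm:PMP:bc} and deferring to \cite{Kamien1991} and \cite{Lenhart2007}, where the standard proof is precisely your reduction to pointwise maximization of $H(t,x^{*}(t),\cdot,\lambda(t))$ on $[a,b]$ followed by elementary first-order analysis at interior points and endpoints. Your additional care about the sign of $\frac{\partial H}{\partial u}$ possibly changing along $[a,b]$ (resolved via concavity in $u$ or a purely local argument at $u^{*}(t)$) supplies a detail the cited treatments also rely on, so this is the same approach, competently filled in.
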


\begin{remark}
If we have a minimization problem instead of maximization,
then $u^{*}$ is instead chosen to minimize $H$ pointwise.
This has the effect of reversing $<$ and $>$ in the first and third lines
of the optimality condition\index{Optimality condition} \eqref{eq:opt:cond}.
\end{remark}

So far, we have only examined problems with one control
and one state variable. Often,
it is necessary to consider more variables.
Below, one such optimal control problem, related to rubella, is presented.
The PMP continues valid for problems with several state and several
control variables.

\begin{example}
\label{example_rubeola}
Rubella, commonly known as German measles, is most common in child age,
caused by the rubella virus. Children recover more quickly than adults.
Rubella can be very serious during pregnancy. The virus is contracted through
the respiratory tract and has an incubation period of 2 to 3 weeks.
The primary symptom of rubella virus infection is the appearance
of a rash on the face which spreads to the trunk and limbs
and usually fades after three days. Other symptoms include low grade fever,
swollen glands, joint pains, headache and conjunctivitis.
We present an optimal control problem to study the dynamics
of rubella over three years, using a vaccination process ($u$)
as a measure to control the disease. More details can be found in \cite{Buonomo2011}.
Let $x_1$ represent the susceptible population, $x_2$ the proportion
of population that is in the incubation period, $x_3$ the proportion
of population that is infected with rubella, and $x_4$ the rule
that keeps the population constant. The optimal control problem can be defined as:
\begin{equation}
\label{cap1:ode_rubeola}
\begin{gathered}
\min \ \int_{0}^{3}(Ax_3+u^2)dt\\
\text{s.t. } \ \dot{x}_1=b-b(p x_2+q x_2)-b x_1-\beta x_1 x_3 - u x_1,\\
\dot{x}_2=b p x_2 +\beta x_1 x_3 -(e+b)x_2,\\
\dot{x}_3=e x_2-(g+b)x_3,\\
\dot{x}_4=b-b x_4,
\end{gathered}
\end{equation}
with initial conditions $x_1(0)=0.0555$, $x_2(0)=0.0003$,
$x_3(0)=0.0004$, $x_4(0)=1$ and the parameters $b=0.012$, $e=36.5$,
$g=30.417$, $p=0.65$, $q=0.65$, $\beta=527.59$ and $A=100$.
The control $u$ is defined as taking values in the interval $[0,0.9]$.
\end{example}

It is not easy to solve analytically the problem of Example~\ref{example_rubeola}.
For the majority of real OC applications, it is necessary to employ numerical methods.


\section{Numerical methods to solve optimal control problems}

In the last decades, the computational power has been developed in an amazing way.
Not only in hardware issues, such as efficiency, memory capacity, speed,
but also in terms of software robustness. Ground breaking achievements
in the field of numerical solution techniques for differential and integral
equations have enabled the simulation of highly complex real world scenarios.
OC also won with these improvements, and numerical methods
and algorithms have evolved significantly.


\subsection{Indirect methods}
\label{sec:2:2}
\index{Indirect methods}

In an indirect method, the PMP\index{Pontryagin's Maximum Principle}
is used. Therefore, the indirect approach leads to a multiple-point boundary-value
problem that is solved to determine candidate optimal trajectories, called extremals.
To apply it, it is necessary to explicitly get the adjoint equations,
the control equations, and all the transversality conditions\index{Transversality condition},
if they exist. A numerical approach using the indirect method,
known as the \emph{backward-forward sweep method}, is now presented.
This method is described in \cite{Lenhart2007}.
The process begins with an initial guess on the control variable. Then, simultaneously,
the state equations are solved forward in time and the adjoint
equations are solved backward in time. The control is updated
by inserting the new values of states and adjoints into its characterization, and
the process is repeated until convergence occurs.
Let us consider $\vec{x}=(x_1,\ldots,x_N+1)$ and $\vec{\lambda}=(\lambda_1,\ldots,\lambda_N+1)$
the vector of approximations for the state and the adjoint.
The main idea of the algorithm is described as follows.
\begin{description}
\item[Step 1.] Make an initial guess for $\vec{u}$
over the interval ($\vec{u}\equiv 0$ is almost always sufficient).
\item[Step 2.] Using the initial condition $x_1=x(t_0)=a$
and the values for $\vec{u}$, solve $\vec{x}$ forward
in time according to its differential equation in the optimality system.
\item[Step 3.] Using the transversality condition $\lambda_{N+1}=\lambda(t_f)=0$
and the values for $\vec{u}$ and $\vec{x}$, solve $\vec{\lambda}$ backward
in time according to its differential equation in the optimality system.
\item[Step 4.] Update $\vec{u}$ by entering the new $\vec{x}$ and $\vec{\lambda}$
values into the characterization of the optimal control.
\item[Step 5.] Verify convergence: if the variables are sufficiently close
to the corresponding ones in the previous iteration, then output the current
values as solutions, otherwise return to Step 2.
\end{description}
For Steps 2 and 3, Lenhart and Workman \cite{Lenhart2007} use, for the state
and adjoint systems, the Runge--Kutta fourth order
procedure to make the discretization process.
On the other hand, Wang \cite{Wang2009} applies the same philosophy
but solving the differential equations with the \texttt{ode45}\index{ode45 routine}
solver of \texttt{Matlab}\index{Matlab}. This solver is based on an explicit
Runge--Kutta (4,5) formula, the Dormand--Prince pair. That means
that the \texttt{ode45} numerical solver combines a fourth and a fifth order method,
both of which being similar to the classical fourth order Runge--Kutta method.
These vary the step size, choosing it at each step, in an attempt to achieve the desired
accuracy. Therefore, the \texttt{ode45} solver is suitable for a wide variety
of initial value problems in practical applications. In general, \texttt{ode45}
is the best method to apply, as a first attempt, to most problems \cite{Houcque}.

\begin{example}
\label{ex_rubeola_Lenhart}
Let us consider the problem of Example~\ref{example_rubeola} about rubella disease.
With $\vec{x}(t)=\left(x_1(t),x_2(t),x_3(t),x_4(t)\right)$
and $\vec{\lambda}(t)=\left(\lambda_1(t),\lambda_2(t),\lambda_3(t),\lambda_4(t)\right)$,
the Hamiltonian of this problem can be written as\index{Hamiltonian}
\begin{multline*}
\label{cap2:hamiltonian_rubeola}
H(t,\vec{x}(t),u(t),\vec{\lambda}(t))= Ax_3+u^2
+\lambda_1\left(b-b(p x_2+q x_2)-b x_1-\beta x_1 x_3 - u x_1\right)\\
+\lambda_2\left(b p x_2 +\beta x_1 x_3 -(e+b)x_2\right)
+\lambda_3\left(e x_2-(g+b)x_3\right)
+\lambda_4\left(b-b x_4\right).
\end{multline*}
Using the PMP, the optimal control problem can be studied with the control system
\begin{equation*}
\label{cap2:ode2_rubeola}
\begin{cases}
\dot{x}_1=b-b(p x_2+q x_2)-b x_1-\beta x_1 x_3 - u x_1\\
\dot{x}_2=b p x_2 +\beta x_1 x_3 -(e+b)x_2\\
\dot{x}_3=e x_2-(g+b)x_3\\
\dot{x}_4=b-b x_4
\end{cases}
\end{equation*}
subject to initial conditions $x_1(0)=0.0555$, $x_2(0)=0.0003$,
$x_3(0)=0.0004$, $x_4(0)=1$, and the adjoint system
\begin{equation*}
\label{cap2:ode3_rubeola}
\begin{cases}
\dot{\lambda}_1=\lambda_1(b+u+\beta x_3) - \lambda_2\beta x_3\\
\dot{\lambda}_2=\lambda_1 b p + \lambda_2(e+b+p b)-\lambda_3 e\\
\dot{\lambda}_3=-A+\lambda_1(b q +\beta x_1)-\lambda_2\beta x_1+\lambda_3(g+b)\\
\dot{\lambda}_4=\lambda_4 b
\end{cases}
\end{equation*}
with transversality conditions\index{Transversality condition}
$\lambda_i(3)=0$, $i=1,\ldots,4$. The optimal control is given by
\begin{equation*}
u^{*}=\left\{
\begin{array}{lll}
0 & \textrm{ if } & \frac{\partial H}{\partial u}<0,\\[0.25cm]
\frac{\lambda_1 x_1}{2} & \textrm{ if } & \frac{\partial H}{\partial u}=0,\\[0.25cm]
0.9 & \textrm{ if } & \frac{\partial H}{\partial u}>0.
\end{array}
\right.
\end{equation*}
We present here the main part of the code for the backward-forward
sweep method with fourth order Runge--Kutta. The complete
code can be found in the website \cite{SofiaSITE}.
The obtained optimal curves for the states variables
and optimal control are shown in Figure~\ref{cap2_rubeola}.
{\tiny{
\begin{verbatim}
for i = 1:M
    m11 = b-b*(p*x2(i)+q*x3(i))-b*x1(i)-beta*x1(i)*x3(i)-u(i)*x1(i);
    m12 = b*p*x2(i)+beta*x1(i)*x3(i)-(e+b)*x2(i);
    m13 = e*x2(i)-(g+b)*x3(i);
    m14 = b-b*x4(i);

    m21 = b-b*(p*(x2(i)+h2*m12)+q*(x3(i)+h2*m13))-b*(x1(i)+h2*m11)-...
        beta*(x1(i)+h2*m11)*(x3(i)+h2*m13)-(0.5*(u(i) + u(i+1)))*(x1(i)+h2*m11);
    m22 = b*p*(x2(i)+h2*m12)+beta*(x1(i)+h2*m11)*(x3(i)+h2*m13)-(e+b)*(x2(i)+h2*m12);
    m23 = e*(x2(i)+h2*m12)-(g+b)*(x3(i)+h2*m13);
    m24 = b-b*(x4(i)+h2*m14);

    m31 = b-b*(p*(x2(i)+h2*m22)+q*(x3(i)+h2*m23))-b*(x1(i)+h2*m21)-...
        beta*(x1(i)+h2*m21)*(x3(i)+h2*m23)-(0.5*(u(i) + u(i+1)))*(x1(i)+h2*m21);
    m32 = b*p*(x2(i)+h2*m22)+beta*(x1(i)+h2*m21)*(x3(i)+h2*m23)-(e+b)*(x2(i)+h2*m22);
    m33 = e*(x2(i)+h2*m22)-(g+b)*(x3(i)+h2*m23);
    m34 = b-b*(x4(i)+h2*m24);

    m41 = b-b*(p*(x2(i)+h2*m32)+q*(x3(i)+h2*m33))-b*(x1(i)+h2*m31)-...
        beta*(x1(i)+h2*m31)*(x3(i)+h2*m33)-u(i+1)*(x1(i)+h2*m31);
    m42 = b*p*(x2(i)+h2*m32)+beta*(x1(i)+h2*m31)*(x3(i)+h2*m33)-(e+b)*(x2(i)+h2*m32);
    m43 = e*(x2(i)+h2*m32)-(g+b)*(x3(i)+h2*m33);
    m44 = b-b*(x4(i)+h2*m34);

    x1(i+1) = x1(i) + (h/6)*(m11 + 2*m21 + 2*m31 + m41);
    x2(i+1) = x2(i) + (h/6)*(m12 + 2*m22 + 2*m32 + m42);
    x3(i+1) = x3(i) + (h/6)*(m13 + 2*m23 + 2*m33 + m43);
    x4(i+1) = x4(i) + (h/6)*(m14 + 2*m24 + 2*m34 + m44);
end

for i = 1:M
    j = M + 2 - i;

    n11 = lambda1(j)*(b+u(j)+beta*x3(j))-lambda2(j)*beta*x3(j);
    n12 = lambda1(j)*b*p+lambda2(j)*(e+b-p*b)-lambda3(j)*e;
    n13 = -A+lambda1(j)*(b*q+beta*x1(j))-lambda2(j)*beta*x1(j)+lambda3(j)*(g+b);
    n14 = b*lambda4(j);

    n21 = (lambda1(j) - h2*n11)*(b+u(j)+beta*(0.5*(x3(j)+x3(j-1))))-...
        (lambda2(j) - h2*n12)*beta*(0.5*(x3(j)+x3(j-1)));
    n22 = (lambda1(j) - h2*n11)*b*p+(lambda2(j) - h2*n12)*(e+b-p*b)-(lambda3(j) - h2*n13)*e;
    n23 = -A+(lambda1(j) - h2*n11)*(b*q+beta*(0.5*(x1(j)+x1(j-1))))-...
        (lambda2(j) - h2*n12)*beta*(0.5*(x1(j)+x1(j-1)))+(lambda3(j) - h2*n13)*(g+b);
    n24 = b*(lambda4(j) - h2*n14);

    n31 = (lambda1(j) - h2*n21)*(b+u(j)+beta*(0.5*(x3(j)+x3(j-1))))-...
        (lambda2(j) - h2*n22)*beta*(0.5*(x3(j)+x3(j-1)));
    n32 = (lambda1(j) - h2*n21)*b*p+(lambda2(j) - h2*n22)*(e+b-p*b)-(lambda3(j) - h2*n23)*e;
    n33 = -A+(lambda1(j) - h2*n21)*(b*q+beta*(0.5*(x1(j)+x1(j-1))))-...
        (lambda2(j) - h2*n22)*beta*(0.5*(x1(j)+x1(j-1)))+(lambda3(j) - h2*n23)*(g+b);
    n34 = b*(lambda4(j) - h2*n24);

    n41 = (lambda1(j) - h2*n31)*(b+u(j)+beta*x3(j-1))-(lambda2(j) - h2*n32)*beta*x3(j-1);
    n42 = (lambda1(j) - h2*n31)*b*p+(lambda2(j) - h2*n32)*(e+b-p*b)-(lambda3(j) - h2*n33)*e;
    n43 = -A+(lambda1(j) - h2*n31)*(b*q+beta*x1(j-1))-...
        (lambda2(j) - h2*n32)*beta*x1(j-1)+(lambda3(j) - h2*n33)*(g+b);
    n44 = b*(lambda4(j) - h2*n34);

    lambda1(j-1) = lambda1(j) - h/6*(n11 + 2*n21 + 2*n31 + n41);
    lambda2(j-1) = lambda2(j) - h/6*(n12 + 2*n22 + 2*n32 + n42);
    lambda3(j-1) = lambda3(j) - h/6*(n13 + 2*n23 + 2*n33 + n43);
    lambda4(j-1) = lambda4(j) - h/6*(n14 + 2*n24 + 2*n34 + n44);
end
u1 = min(0.9,max(0,lambda1.*x1/2));
\end{verbatim}}}
\begin{figure}[ptbh]
\center
\includegraphics[scale=0.46]{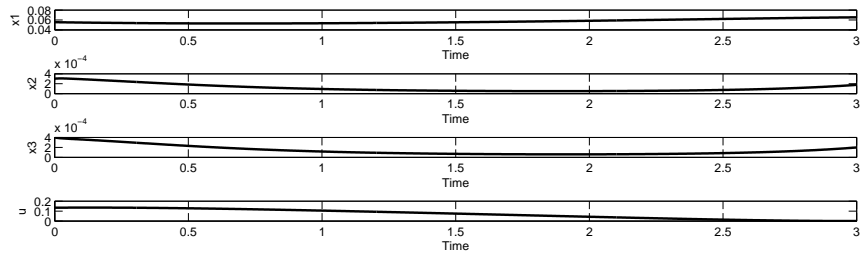}\\
{\caption{\label{cap2_rubeola} The optimal curves
for the rubella problem of Example~\ref{example_rubeola}.}}
\end{figure}
\end{example}
There are several difficulties to overcome when
an optimal control problem is solved by indirect methods\index{Indirect methods}.
Firstly, it is necessary to calculate the Hamiltonian\index{Hamiltonian},
adjoint equations, the optimality\index{Optimality condition}
and transversality conditions\index{Transversality condition}.
Besides, the approach is not flexible, since each time a new problem is formulated,
a new derivation is required. In contrast, a direct method\index{Direct method}
does not require explicit derivation of necessary conditions.
Due to its simplicity, the direct approach has been gaining popularity
in numerical optimal control over the past three decades \cite{Betts2001}.


\subsection{Direct methods}
\label{sec:2:3}

A new family of numerical methods for dynamic
optimization has emerged, referred as direct methods.\index{Direct method}
This development has been driven by the industrial need to solve large-scale
optimization problems and it has also been supported by the rapidly increasing
computational power. A direct method\index{Direct method} constructs a sequence of points
$x_1, x_2,\ldots,x^{*}$, such that the objective function $F$ to be minimized
satisfies $F(x_1)>F(x_2)> \cdots >F(x^{*})$. Here the state and/or control
are approximated using an appropriate function approximation
(e.g., polynomial approximation or piecewise constant parameterization).
Simultaneously, the cost functional is approximated as a cost function.
Then, the coefficients of the function approximations are treated
as optimization variables and the problem is reformulated
as a standard nonlinear optimization problem (NLP):
\begin{gather*}
\underset{x}{\min} \ F(x) \\
\text{s.t. } \ c_{i}(x)=0, \quad i\in E, \\
c_{j}(x)\geq 0, \quad j \in I,
\end{gather*}
where $c_{i}$, $i\in E$, and $c_{j}$, $j\in I$,
are the set of equality and inequality constraints, respectively.
In fact, the NLP is easier to solve than the boundary-value problem,
mainly due to the sparsity of the NLP and the many well-known software programs
that can handle it. As a result, the range of problems that
can be solved via direct methods\index{Direct method} is significantly larger
than the range of problems that can be solved via indirect methods. Direct methods
have become so popular these days that many people have written sophisticated
software programs that employ these methods.
Here we present two types of codes/packages: specific solvers
for OC problems and standard NLP solvers used after a discretization process.


\subsubsection{Specific optimal control software}

\subsubsection*{OC-ODE}

\index{OC-ODE}

The \texttt{OC-ODE} \cite{Matthias2009}, \emph{Optimal Control of Ordinary-Differential Equations},
by Matthias Gerdts, is a collection of \texttt{Fortran 77} routines for optimal control problems subject
to ordinary differential equations. It uses an automatic direct discretization method
for the transformation of the OC problem into a finite-dimensional NLP.
\texttt{OC-ODE} includes procedures for numerical adjoint estimation and sensitivity analysis.

\begin{example}
\label{ex_rubeola_OC-ODE}
Considering the same problem of Example~\ref{example_rubeola}, we show the main part
of the code in \texttt{OC-ODE}. The complete code can be found in the website \cite{SofiaSITE}.
The achieved solution is similar to the indirect approach plotted in Figure~\ref{cap2_rubeola},
and therefore is omitted.
\scriptsize{
\begin{verbatim}
c     Call to OC-ODE
c      OPEN( INFO(9),FILE='OUT',STATUS='UNKNOWN')
      CALL OCODE( T, XL, XU, UL, UU, P, G, BC,
     +     TOL, TAUU, TAUX, LIW, LRW, IRES,
     +     IREALTIME, NREALTIME, HREALTIME,
     +     IADJOINT, RWADJ, LRWADJ, IWADJ, LIWADJ, .FALSE.,
     +     MERIT,IUPDATE,LENACTIVE,ACTIVE,IPARAM,PARAM,
     +     DIM,INFO,IWORK,RWORK,SOL,NVAR,IUSER,USER)
      PRINT*,'Ausgabe der Loesung: NVAR=',NVAR
      WRITE(*,'(E30.16)') (SOL(I),I=1,NVAR)
c      CLOSE(INFO(9))
c      READ(*,*)
      END
c-------------------------------------------------------------------------
c     Objective Function
c-------------------------------------------------------------------------
      SUBROUTINE OBJ( X0, XF, TF, P, V, IUSER, USER )
      IMPLICIT NONE
      INTEGER IUSER(*)
      DOUBLEPRECISION X0(*),XF(*),TF,P(*),V,USER(*)
      V = XF(5)
      RETURN
      END
c-------------------------------------------------------------------------
c     Differential Equation
c-------------------------------------------------------------------------
 SUBROUTINE DAE( T, X, XP, U, P, F, IFLAG, IUSER, USER )
      IMPLICIT NONE
      INTEGER IFLAG,IUSER(*)
      DOUBLEPRECISION T,X(*),XP(*),U(*),P(*),F(*),USER(*)
c     INTEGER NONE
      DOUBLEPRECISION B, E, G, P, Q, BETA, A

      B = 0.012D0
      E = 36.5D0
      G = 30.417D0
      P = 0.65D0
      Q = 0.65D0
      BETA = 527.59D0
      A = 100.0D0

      F(1) = B-B*(P*X(2)+Q*X(3))-B*X(1)-BETA*X(1)*X(3)-U(1)*X(1)
      F(2) = B*P*X(2)+BETA*X(1)*X(3)-(E+B)*X(2)
      F(3) = E*X(2)-(G+B)*X(3)
      F(4) = B-B*X(4)
      F(5) = A*X(3))+U(1)**2
      RETURN
      END
\end{verbatim}
}
\end{example}


\subsubsection*{DOTcvp}
\index{DOTcvp}

The \texttt{DOTcvp} \cite{Dotcvp}, \emph{Dynamic Optimization Toolbox with Vector Control Parametrization},
is a dynamic optimization toolbox for \texttt{Matlab}\index{Matlab}. The toolbox provides
an environment for a \texttt{Fortran} compiler to create the '.dll' files of the ODE, Jacobian, and sensitivities.
However, a \texttt{Fortran} compiler has to be installed in the \texttt{Matlab} environment.
The toolbox uses the control vector parametrization approach for the calculation
of the optimal control profiles, giving a piecewise solution for the control.
The OC problem has to be defined in Mayer form\index{Mayer form}.
For solving the NLP, the user can choose several deterministic solvers
--- \texttt{Ipopt}\index{Ipopt}, \texttt{Fmincon}, \texttt{FSQP} ---
or stochastic solvers --- \texttt{DE}, \texttt{SRES}.
The modified \texttt{SUNDIALS} tool \cite{Hindmarsh2005} is used for solving
the IVP and for the gradients and Jacobian automatic generation. Forward integration
of the ODE system is ensured by CVODES, a part of \texttt{SUNDIALS}, which is able to perform
the simultaneous or staggered sensitivity analysis too. The IVP can be solved with the Newton
or Functional iteration module and with the Adams or BDF linear multistep method. Note that the
sensitivity equations are analytically provided and the error control strategy for the sensitivity variables
can be enabled. \texttt{DOTcvp} has a user friendly graphical interface (GUI).

\begin{example}
Considering the same problem of Example~\ref{example_rubeola},
we present here a part of the code used in \texttt{DOTcvp}. The complete
code can be found in the website \cite{SofiaSITE}. The solution,
despite being piecewise continuous, follows the curves plotted in Figure~\ref{cap2_rubeola}.
\tiny{
\begin{verbatim}
% --------------------------------------------------- %
% Settings for IVP (ODEs, sensitivities):
% --------------------------------------------------- %
data.odes.Def_FORTRAN     = {''}; %this option is needed only for FORTRAN parameters definition,
                            e.g. {'double precision k10, k20, ..'}
data.odes.parameters      = {'b=0.012',' e=36.5',' g=30.417',' p=0.65',' q=0.65',' beta=527.59',
                            ' d=0',' phi1=0','phi2=0','A=100 '};
data.odes.Def_MATLAB      = {''}; %this option is needed only for MATLAB parameters definition
data.odes.res(1)          = {'b-b*(p*y(2)+q*y(3))-b*y(1)-beta*y(1)*y(3)-u(1)*y(1)'};
data.odes.res(2)          = {'b*(p*y(2)+q*phi1*y(3))+beta*y(1)*y(3)-(e+b)*y(2)'};
data.odes.res(3)          = {'b*q*phi2*y(3)+e*y(2)-(g+b)*y(3)'};
data.odes.res(4)          = {'b-b*y(4)'};
data.odes.res(5)          = {'A*y(3)+u(1)*u(1)'};
data.odes.black_box       = {'None','1.0','FunctionName'}; %['None'|'Full'],[penalty coefficient
                            for all constraints],...
                            [a black box model function name]
data.odes.ic              = [0.0555 0.0003 0.0004 1 0];
data.odes.NUMs            = size(data.odes.res,2); %number of state variables (y)
data.odes.t0              = 0.0; %initial time
data.odes.tf              = 3; %final time
data.odes.NonlinearSolver = 'Newton'; %['Newton'|'Functional'] /Newton for stiff problems;
                            Functional for non-stiff problems
data.odes.LinearSolver    = 'Dense'; %direct ['Dense'|'Diag'|'Band']; iterative
                            ['GMRES'|'BiCGStab'|'TFQMR'] /for the Newton NLS
data.odes.LMM             = 'Adams'; %['Adams'|'BDF'] /Adams for non-stiff problems;
                            BDF for stiff problems
data.odes.MaxNumStep      = 500; %maximum number of steps
data.odes.RelTol          = 1e-007; %IVP relative tolerance level
data.odes.AbsTol          = 1e-007; %IVP absolute tolerance level
data.sens.SensAbsTol      = 1e-007; %absolute tolerance for sensitivity variables
data.sens.SensMethod      = 'Staggered'; %['Staggered'|'Staggered1'|'Simultaneous']
data.sens.SensErrorControl= 'on'; %['on'|'off']
% --------------------------------------------------- %
% NLP definition:
% --------------------------------------------------- %
data.nlp.RHO              = 10; %number of time intervals
data.nlp.problem          = 'min'; %['min'|'max']
data.nlp.J0               = 'y(5)'; %cost function: min-max(cost function)
data.nlp.u0               = [0 ]; %initial value for control values
data.nlp.lb               = [0 ]; %lower bounds for control values
data.nlp.ub               = [0.9]; %upper bounds for control values
data.nlp.p0               = []; %initial values for time-independent parameters
data.nlp.lbp              = []; %lower bounds for time-independent parameters
data.nlp.ubp              = []; %upper bounds for time-independent parameters
data.nlp.solver           = 'IPOPT'; %['FMINCON'|'IPOPT'|'SRES'|'DE'|'ACOMI'|'MISQP'|'MITS']
data.nlp.SolverSettings   = 'None'; %insert the name of the file that contains settings
                            for NLP solver, if does not exist use ['None']
data.nlp.NLPtol           = 1e-005; %NLP tolerance level
data.nlp.GradMethod       = 'FiniteDifference'; %['SensitivityEq'|'FiniteDifference'|'None']
data.nlp.MaxIter          = 1000; %Maximum number of iterations
data.nlp.MaxCPUTime       = 60*60*0.25; %Maximum CPU time of the optimization
                            (60*60*0.25) = 15 minutes
data.nlp.approximation    = 'PWC'; %['PWC'|'PWL'] PWL only for: FMINCON & without the
                            free time problem
data.nlp.FreeTime         = 'off'; %['on'|'off'] set 'on' if free time is considered
data.nlp.t0Time           = [data.odes.tf/data.nlp.RHO]; %initial size of the time intervals
data.nlp.lbTime           = 0.01; %lower bound of the time intervals
data.nlp.ubTime           = data.odes.tf; %upper bound of the time intervals
data.nlp.NUMc             = size(data.nlp.u0,2); %number of control variables (u)
data.nlp.NUMi             = 0; %number of integer variables (u) taken from the last
                            control variables,
if not equal to 0 you need to use some MINLP solver ['ACOMI'|'MISQP'|'MITS']
data.nlp.NUMp             = size(data.nlp.p0,2); %number of time-independent parameters (p)
\end{verbatim}
}
\normalsize
\end{example}


\subsubsection*{Muscod-II}
\index{Muscod-II}

In NEOS\index{NEOS platform} platform \cite{NEOS}, there is a large set
of software packages. NEOS is considered as the state of the art in optimization.
One recent solver is \texttt{Muscod-II} \cite{Muscod} (Multiple Shooting CODe for Optimal Control)
for the solution of mixed integer nonlinear ODE or DAE constrained optimal
control problems in an extended \texttt{AMPL}\index{AMPL} format.
\texttt{AMPL} is a modelling language for mathematical programming
created by Fourer, Gay and Kernighan \cite{AMPL}. The modelling languages organize
and automate the tasks of modelling, which can handle a large volume of data and,
moreover, can be used in machines and independent solvers, allowing the user
to concentrate on the model instead of the methodology to reach the solution.
However, the \texttt{AMPL} modelling language itself does not allow the formulation
of differential equations. Hence, the \texttt{TACO Toolkit} has been designed
to implement a small set of extensions for easy and convenient modeling
of optimal control problems in \texttt{AMPL}, without the need for explicit encoding
of discretization schemes. Both the \texttt{TACO Toolkit} and the
NEOS interface to \texttt{Muscod-II} are still under development.

\begin{example}
\ \
\scriptsize{
\begin{verbatim}
include OptimalControl.mod;
var t ;									
var x1, >=0 <=1;		
var x2, >=0 <=1;
var x3, >=0 <=1;
var x4, >=0 <=1;		
var u >=0, <=0.9 suffix type "u0";

minimize
cost: integral (A*x3+u^2,3);	

subject to
	c1: diff(x1,t) = b-b*(p*x2+q*x3)-b*x1-beta*x1*x3-u*x1;
	c2: diff(x2,t) = b*p*x2+beta*x1*x3-(e+b)*x2;
	c3: diff(x3,t) = e*x2-(g+b)*x3;
	c4: diff(x4,t) = b-b*x4;

\end{verbatim}
}
\normalsize
\end{example}


\subsubsection{Nonlinear optimization software}

The three nonlinear optimization software packages presented here,
were used through the NEOS platform with codes formulated in \texttt{AMPL}\index{AMPL}.


\subsubsection*{Ipopt}

The \texttt{Ipopt} \cite{Ipopt}, \emph{Interior Point OPTimizer},\index{Ipopt}
is a software package for large-scale nonlinear optimization.
It is written in \texttt{Fortran} and \texttt{C}. \texttt{Ipopt} implements
a primal-dual interior point method and uses a line search strategy based on the filter method.
\texttt{Ipopt} can be used from various modeling environments.
It is designed to exploit 1st and 2nd derivative information,
if provided, usually via automatic differentiation routines in modeling
environments such as \texttt{AMPL}\index{AMPL}. If no Hessians are provided,
\texttt{Ipopt} will approximate them using a quasi-Newton method, specifically a BFGS update.

\begin{example}
\label{ex_rubeola_IPOPT}
Continuing with problem of Example~\ref{example_rubeola}, the \texttt{AMPL} code
is here shown for \texttt{Ipopt}. The Euler discretization was selected.
This code can also be implemented in other nonlinear software packages
available in NEOS platform, reason why the code for the next two software packages
will not be shown. The full version can be found on the website \cite{SofiaSITE}.
\scriptsize{
\begin{verbatim}
#### OBJECTIVE FUNCTION ###
minimize cost: fc[N];

#### CONSTRAINTS ###
subject to
	i1: x1[0] = x1_0;
	i2: x2[0] = x2_0;
	i3: x3[0] = x3_0;
	i4: x4[0] = x4_0;
	i5: fc[0] = fc_0;
	
	f1 {i in 0..N-1}: x1[i+1] = x1[i] + (tf/N)*(b-b*(p*x2[i]+q*x3[i])-b*x1[i]
                                -beta*x1[i]*x3[i]-u[i]*x1[i]);
	f2 {i in 0..N-1}: x2[i+1] = x2[i]+(tf/N)*(b*p*x2[i]+beta*x1[i]*x3[i]-(e+b)*x2[i]);
	f3 {i in 0..N-1}: x3[i+1] = x3[i] + (tf/N)*(e*x2[i]-(g+b)*x3[i]);
	f4 {i in 0..N-1}: x4[i+1] = x4[i] + (tf/N)*(b-b*x4[i]);
	f5 {i in 0..N-1}: fc[i+1] = fc[i] + (tf/N)*(A*x3[i]+u[i]^2);
\end{verbatim}
}
\normalsize
\end{example}


\subsubsection*{Knitro}
\index{Knitro}

\texttt{Knitro} \cite{Knitro}, short for ``Nonlinear Interior point Trust Region Optimization'',
was created primarily by Richard Waltz, Jorge Nocedal, Todd Plantenga and Richard Byrd.
It was introduced in 2001 as a derivative of academic research at Northwestern,
and has undergone continual improvement since then.
\texttt{Knitro} is also a software for solving large scale mathematical optimization problems
based mainly on the two Interior Point (IP) methods and one active set algorithm.
\texttt{Knitro} is specialized for nonlinear optimization, but also solves linear programming problems,
quadratic programming problems, and systems of nonlinear equations. The unknowns in these problems
must be continuous variables in continuous functions. However, functions can be convex or nonconvex.
The code also provides a multistart option for promoting the computation of the global minimum.
This software was tested through the NEOS platform\index{NEOS platform}.


\subsubsection*{Snopt}
\index{Snopt}

\texttt{Snopt} \cite{Snopt}, by Philip Gill, Walter Murray and Michael Saunders,
is a software package for solving large-scale optimization problems (linear and nonlinear programs).
It is specially effective for nonlinear problems whose functions and gradients are expensive to evaluate.
The functions should be smooth but do not need to be convex.
\texttt{Snopt} is implemented in \texttt{Fortran 77} and distributed as source code.
It uses the SQP (Sequential Quadratic Programming) philosophy,
with an augmented Lagrangian approach combining a trust region
adapted to handle the bound constraints. \texttt{Snopt} is also available
in NEOS platform\index{NEOS platform}.


\section{Conclusion}

Choosing a method for solving an optimal control problem depends largely on the type of problem to be
solved and the amount of time that can be invested in coding.
An indirect shooting method has the advantage of being simple to understand
and produces highly accurate solutions when it converges \cite{Rao2009}.
The accuracy and robustness of a direct method\index{Direct method}
is highly dependent upon the method used. Nevertheless, it is easier
to formulate highly complex problems in a direct way and standard
NLP solvers can be used, which is an extra advantage. This last feature has the benefit of converging
with poor initial guesses and being extremely computationally efficient since most
of the solvers exploit the sparsity of the derivatives in the constraints and objective function.


\section*{Acknowledgements}

This work was supported by FEDER funds through COMPETE --
Operational Programme Factors of Competitiveness (``Programa
Operacional Factores de Competitividade'') and by Portuguese
funds through the Portuguese Foundation for Science and Technology
(``FCT -- Funda\c{c}\~{a}o para a Ci\^{e}ncia e a Tecnologia''), within
project PEst-C/MAT/UI4106/2011 with COMPETE number
FCOMP-01–0124-FEDER-022690. Rodrigues was also supported
by the Center for Research and Development
in Mathematics and Applications (CIDMA),
Monteiro by the R\&D unit ALGORITMI
and project FCOMP-01–0124-FEDER-022674,
Torres by CIDMA and by the FCT project PTDC/EEI-AUT/1450/2012,
co-financed by FEDER under POFC-QREN with COMPETE reference
FCOMP-01-0124-FEDER-028894.



\label{lastpage-01}


\begin{thebibliography}{10}

\bibitem{Bryson1996}
A.~E. Bryson~Jr.
\newblock Optimal control 1950 to 1985.
\newblock {\em IEEE Control Systems Magazine}, 26--33, 1996.

\bibitem{Sussmann1997}
H.~J. Sussmann and J.~C. Willems.
\newblock 300 years of optimal control:
from the {B}rachystochrone to the {M}aximum {P}rinciple.
\newblock {\em IEEE Control Systems Magazine}, 32--44, 1997.

\bibitem{Pontryagin1962}
L.~S. Pontryagin, V.~G. Boltyanskii, R.~V. Gamkrelidze, and E.~F. Mishchenko.
\newblock {\em The mathematical theory of optimal processes}.
\newblock Translated from the Russian by K. N. Trirogoff; edited by L. W. Neustadt.
Interscience Publishers John Wiley \& Sons, Inc.\, New York-London, 1962.

\bibitem{Leitmann1997}
G.~Leitmann.
\newblock {\em The calculus of variations and optimal control.
{A}n introduction}.
\newblock Mathematical Concepts and Methods in Science and Engineering, 24.
New York: Plenum Press, 1997.

\bibitem{Kirk1998}
D.~E. Kirk.
\newblock {\em Optimal Control Theory: An Introduction}.
\newblock Dover Publications, 1998.

\bibitem{Goh2008}
B.~S. Goh.
\newblock Optimal singular rocket and aircraft trajectories.
\newblock In {\em Control and Decision Conference 2008 (CCDC 2008)},
1531--1536, July 2008.

\bibitem{Molavi2008}
R.~Molavi and D.~A. Khaburi.
\newblock Optimal control strategies for speed control
of permanent-magnet synchronous motor drives.
\newblock In {\em Proceedings of World Academy of Science,
Engineering and Technology}, vol.~44, 428--432, 2008.

\bibitem{Bonnard2008}
B.~Bonnard and G.~Janin.
\newblock Geometric orbital transfer using averaging techniques.
\newblock {\em J. Dyn. Control Syst.}, 14(2):145--167, 2008.

\bibitem{Hermant2010}
A.~Hermant.
\newblock Optimal control of the atmospheric reentry
of a space shuttle by an homotopy method.
\newblock {\em Optimal Control Applications and Methods}, 32(6):627--646, 2010.

\bibitem{Munteanu2008}
I.~Munteanu, A.~I. Bratcu, N.~A. Cutululis, and E.~Ceanga.
\newblock {\em Optimal Control of Wind Energy Systems: Towards a Global Approach},
volume XXII of {\em Advances in Industrial Control}.
\newblock Springer, 2008.

\bibitem{Sun2008}
Z.~Sun and S.~Li.
\newblock Optimal control of dynamic investment on inventory with stochastic demand.
\newblock In {\em Control and Decision Conference, 2008 (CCDC 2008)}, 3200--3203, 2008.

\bibitem{Joshi2002}
H.~R. Joshi.
\newblock Optimal control of an {HIV} immunology model.
\newblock {\em Optimal Control Appl. Methods}, 23(4):199--213, 2002.

\bibitem{Joshi2006}
H.~R. Joshi, S.~Lenhart, M.~Y. Li, and L.~Wang.
\newblock Optimal control methods applied to disease models.
\newblock In {\em Mathematical studies on human disease dynamics}, volume 410
of {\em Contemp. Math.}, 187--207. Amer. Math. Soc., Providence, RI, 2006.

\bibitem{Lenhart2007}
S.~Lenhart and J.~T. Workman.
\newblock {\em Optimal control applied to biological models}.
\newblock Chapman \& Hall/CRC Mathematical and Computational Biology Series.
Chapman \& Hall/CRC, Boca Raton, FL, 2007.

\bibitem{Nanda2007}
S.~Nanda, H.~Moore, and S.~Lenhart.
\newblock Optimal control of treatment in
a mathematical model of chronic myelogenous leukemia.
\newblock {\em Math. Biosci.}, 210(1):143--156, 2007.

\bibitem{Chachuat2007}
B.~Chachuat.
\newblock {\em Nonlinear and Dynamic Optimization: From Theory to Practice}.
\newblock Automatic Control Laboratory, EPFL, Switzerland, 2007.

\bibitem{Zabczyk2008}
J.~Zabczyk.
\newblock {\em Mathematical control theory}.
\newblock Modern Birkh\"auser Classics. Birkh\"auser Boston Inc., Boston, MA, 2008.
\newblock Reprint of the 1995 edition.

\bibitem{Lewis1995}
F.~L. Lewis and V.~L. Syrmos.
\newblock {\em Optimal Control}.
\newblock Wiley: New York, 2nd ed edition, 1995.

\bibitem{Clarke1990}
F.~H. Clarke.
\newblock {\em Optimization and nonsmooth analysis}, volume~5 of {\em Classics
in Applied Mathematics}.
\newblock Society for Industrial and Applied Mathematics (SIAM), Philadelphia,
PA, second edition, 1990.

\bibitem{Fleming1975}
W.~H. Fleming and R.~W. Rishel.
\newblock {\em Deterministic and Stochastic Optimal Control}.
\newblock Springer-Verlag, 1975.

\bibitem{Kamien1991}
M.~I. Kamien and N.~L. Schwartz.
\newblock {\em Dynamic Optimization:
The {C}alulus of {V}ariations and {O}ptimal {C}ontrol in {E}conomics and {M}anagement}.
\newblock North-Holland, 1991.

\bibitem{Macki1982}
J.~Macki and A.~Strauss.
\newblock {\em Introduction to {O}ptimal {C}ontrol {T}heory}.
\newblock Sproinger-Verlag, 1982.

\bibitem{my:reg1}
A. V. Sarychev and D. F. M. Torres.
\newblock Lipschitzian regularity of minimizers for optimal control problems
with control-affine dynamics.
\newblock Appl. Math. Optim. 41(2):237--254, 2000.

\bibitem{my:reg2}
D. F. M. Torres.
\newblock Lipschitzian regularity of the minimizing trajectories 
for nonlinear optimal control problems.
Math. Control Signals Systems 16(2-3):158--174, 2003.
{\tt arXiv:math/0212103}

\bibitem{Buonomo2011}
B.~Buonomo.
\newblock On the optimal vaccination strategies
for horizontally and vertically transmitted infectious diseases.
\newblock {\em Journal of Biological Systems}, 19(2):263--279, 2011.

\bibitem{Wang2009}
X.~Wang.
\newblock Solving optimal control problems with {MATLAB}: Indirect methods.
\newblock Technical report, {ISE} Dept., {NCSU}, 2009.

\bibitem{Houcque}
D.~Houcque.
\newblock Applications of {MATLAB}: Ordinary differential equations ({ODE}).
\newblock Technical report, Robert R. McCormick School of Engineering and
Applied Science -- Northwestern University, Evanston.

\bibitem{SofiaSITE}
H. S. Rodrigues.
\url{https://sites.google.com/site/hsofiarodrigues}, PhD codes, 2012.

\bibitem{Betts2001}
J.~Betts.
\newblock {\em Practical Methods for Optimal Control Using Nonlinear Programming}.
\newblock SIAM: Advances in Design and Control, 2001.

\bibitem{Matthias2009}
M.~Gerdts.
\newblock User's guide {OC-ODE} (version 1.4).
\newblock Technical report, Universit\"{a}t W\"{u}rzburg, May 2009.

\bibitem{Dotcvp}
T.~Hirmajer, E.~Balsa-Canto, and J.~R. Banga.
\newblock Dotcvpsb, a software toolbox for dynamic optimization in systems biology.
\newblock {\em Bioinformatics}, 10:199--213, 2009.

\bibitem{Hindmarsh2005}
A.~C. Hindmarsh and \emph{\emph{et al.}}
\newblock Sundials: Suite of nonlinear and differential/algebraic equation solvers.
\newblock {\em {ACM} Transactions on Mathematical Software}, 31(3):363--396, 2005.

\bibitem{NEOS}
{NEOS}.
\newblock \url{http://neos.mcs.anl.gov/neos}.

\bibitem{Muscod}
P.~K\"{u}hl and \emph{et al.}
\newblock {\em MUSCOD-II Users Manual}.
\newblock University of Heidelberg, June 2007.

\bibitem{AMPL}
R.~Fourer, D.~M. Gay, and B.~W. Kernighan.
\newblock {\em AMPL: A Modeling Language for Mathematical Programming}.
\newblock Duxbury Press / Brooks/Cole Publishing Company, 2002.

\bibitem{Ipopt}
A.~W\"{a}chter and L.~T. Biegler.
\newblock On the implementation of an interior-point filter line-search
algorithm for large-scale nonlinear programming.
\newblock {\em Mathematical Programming: Series A and B}, 106(1):25--57, 2006.

\bibitem{Knitro}
R.~H. Byrd, J.~Nocedal, and R.~A. Waltzy.
\newblock {\em Large-Scale Nonlinear Optimization},
chapter Knitro: An Integrated Package for Nonlinear Optimization, 35--59.
\newblock Springer-Verlag, 2006.

\bibitem{Snopt}
P.~E. Gill and M.~A. Murray, W.~Sauders.
\newblock Snopt: An {S}{Q}{P} algorithm
for large-scale constrained optimization.
\newblock {\em SIAM}, 47(1):99--131, 2005.

\bibitem{Rao2009}
A.~V. Rao.
\newblock A survey of numerical method for optimal control.
\newblock Technical Report AAS 09--334,
Dep. of Mechanical and Aerospace Engineering, University of Florida, 2009.

\end{thebibliography}
\end{document}